\documentclass[10pt]{article}

\usepackage[utf8]{inputenc}
\usepackage{mathtools,
        mathrsfs,amssymb,amsthm, amsmath,
        bm}
\usepackage{comment}
\usepackage{hyperref}
\usepackage{url}
\usepackage[a4paper, left = 1in, right = 1in, top =1in, bottom = 1in]{geometry} 
\usepackage{dsfont}   
\usepackage[british]{babel}
\usepackage{enumitem}

\setlength{\footnotesep}{0.3cm}
\linespread{1.35}

\newtheorem*{thm}{Theorem}

\theoremstyle{definition}

\newtheorem*{rem}{Remarks}

\makeatletter
\renewenvironment{proof}[1][\proofname] {\par\pushQED{\qed}\normalfont\topsep6\p@\@plus6\p@\relax\trivlist\item[\hskip\labelsep\bfseries#1\@addpunct{.}]\ignorespaces}{\popQED\endtrivlist\@endpefalse}
\makeatother

\def\[#1\]{\begin{align*}#1\end{align*}}

\newcommand{\R}{\mathbb{R}}
\newcommand{\N}{\mathbb{N}}
\newcommand{\Z}{\mathbb{Z}}
\newcommand{\ceq}{\coloneqq}
\newcommand{\eps}{\varepsilon}

\renewcommand{\P}{\mathbb{P}}
\newcommand{\B}{\mathscr{B}}
\newcommand{\I}{\mathds{1}}

\begin{document}

\title{Sensitive Random Variables are Dense in Every $L^{p}(\R, \B_{\R}, \P)$}
\author{Yu-Lin Chou\thanks{Yu-Lin Chou, Institute of Statistics, National Tsing Hua University, Hsinchu 30013, Taiwan,  R.O.C.; Email: \protect\url{y.l.chou@gapp.nthu.edu.tw}.}}
\date{}
\maketitle

\begin{abstract}
We show that, for every $1 \leq p < +\infty$ and for every Borel
probability measure $\P$ over $\R$, every element of $L^{p}(\R, \B_{\R}, \P)$
is the $L^{p}$-limit of some sequence of bounded random variables 
that are Lebesgue-almost everywhere differentiable with derivatives
having norm greater than any pre-specified real number at every point
of differentiability. In general, this result provides, in some direction,
a finer description of an $L^{p}$-approximation for $L^{p}$ functions
on $\R$.\\

{\noindent \textbf{Keywords:}} differentiation; $L^{p}$-denseness; polygonal approximation; sensitive Borel random variables on the real line; statistics\\
{\noindent \textbf{MSC 2020:}} 60A10; 46E30; 26A24
\end{abstract}

\section{Introduction}

Regarding an $L^{p}$-approximation for $L^{p}$ functions with $1 \leq p < +\infty$,
it is well-known, apart from the classical result of the $L^{p}$-denseness
of simple measurable functions on an arbitrary measure space such that the co-zero-set of each of them has finite measure,
that compactly supported continuous functions on a Radon measure space whose
ambient space is locally compact Hausdorff are $L^{p}$-dense.

Given any probability measure $\P$ defined on the Borel sigma-algebra
$\B_{\R}$ of $\R$, we are interested in giving a more ``controlled”
$L^{p}$-approximation for every $1 \leq p < +\infty$, in terms of
the local steepness of the graphs of the approximating random variables.
For our purposes, a Borel random variable on ($\P$-probabilitized)
$\R$ is called $M$-\textit{sensitive} if and only if the random
variable is $\in L^{\infty}(\R, \B_{\R}, \P)$, is differentiable almost everywhere modulo Lebesgue measure,
and has the property that the normed derivative is $> M$ at every
point of differentiability. Here $M$ is a given real number $\geq 0$.
We stress that for a random variable on $\R$ to be $M$-sensitive
is a function-theoretic property of the random variable itself\footnote{In practice, the requirement of $\P$-essential boundedness may be strengthened to be boundedness; the definition thus does not really depend on the choice of the underlying probability measure $\P$.}; this
conceptual clarification would be advisable as the conventional terminology
in probability theory would lead the reader to instinctively take
our $M$-sensitiveness as a distributional property of a random variable
in analogy with ``absolutely continuous random variables”. Now
what we should like to prove is the possibility of $L^{p}$-approximating
any given element of $L^{p}(\R, \B_{\R}, \P)$ by $M$-sensitive random
variables.

We have communicated our intention mostly in a mathematically pure
language, which is for an \textit{a priori} concern of clarity\footnote{Using pure language can also be justified; probability theory is embedded in mathematics once we acknowledge that probability is viewed as a measure, and statistics, on the other hand, is embedded in mathematics once we interpret samples in terms of random variables. In this regard, studying properties of the Lebesgue spaces of random variables, being a fundamental workplace in probability theory (and statistics), advances the understanding of both fields.}.
But our finding also admits a natural, application-oriented interpretation.
As a statistic is in the broad sense a Borel function from $\R^{n}$
to $\R$, our result says that, given any $1 \leq p < +\infty$, every statistic of a single sample point drawn from a Borel distribution over $\R$, such as a Gaussian one, can be $L^{p}$-approximated by statistics with finite $p$-th moment that are ``as zigzagged as desired”, provided that the given statistic has finite $p$-th moment. 

\section{Proof}

We clarify, before the proof, some everyday terms present in the introductory
discussion that might not always admit a uniform usage in the related
literature. By a \textit{Borel probability measure over} $\R$ is
meant a probability measure defined on $\B_{\R}$; by a \textit{Borel random variable}
on $\R$, with $\R$ considered as a Borel probability space, we mean
an $\R$-valued, Borel-measurable function defined on $\R$. The set
of all reals $\geq 0$ will be denoted by $\R_{+}$.

The reader is invited to mind that our definition of the concept of
$M$-sensitiveness presumes Borel measurability; we should like to
prove

\begin{thm}\label{main}

If $1 \leq p < +\infty$, and if $\P$ is a Borel probability measure
over $\mathbb{R}$, then the $M$-sensitive random variables are $L^{p}$-dense
in $L^{p}(\R, \B_{\R}, \P)$ for every $M \in \R_{+}$. 

\end{thm}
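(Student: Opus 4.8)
The plan is to make two standard reductions and then one hands-on construction. \emph{Step 1: reduce to step functions.} Since $\P$ is a probability measure, every simple Borel function has a co-zero-set of finite measure, so the classical $L^{p}$-denseness of simple functions with finite-measure co-zero-set applies: it suffices to $L^{p}(\P)$-approximate an arbitrary $g = \sum_{i=1}^{n} c_{i}\I_{B_{i}}$ with $\{B_{i}\}_{i \leq n}$ a finite Borel partition of $\R$. Now, being a finite Borel measure on $\R$, $\P$ is outer regular; given $\eta > 0$ I would pick an open $U_{i} \supseteq B_{i}$ with $\P(U_{i} \setminus B_{i}) < \eta$, write $U_{i}$ as a countable disjoint union of open intervals, and retain only finitely many of them at the cost of a further $\eta$ of $\P$-mass, so that each $B_{i}$ is approximable in $\P$-measure by a finite union of intervals. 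A triangle-inequality estimate then yields that the \emph{step functions}---Borel functions that are constant on each cell of some finite partition of $\R$ into (possibly unbounded or degenerate) intervals---are dense in $L^{p}(\R,\B_{\R},\P)$. So it is enough to approximate a given step function $g$, within a given $\eps > 0$, by an $M$-sensitive random variable.

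\emph{Step 2: zig-zag each flat piece.} Say $g$ equals the constant $c_{i}$ on the cell $I_{i}$ of a finite interval partition $\R = I_{1} \sqcup \dots \sqcup I_{n}$, and put $\delta \ceq \eps/2$. On each non-degenerate $I_{i}$ I define $h$ to coincide with a continuous, piecewise-affine ``sawtooth'' that oscillates between $c_{i} - \delta$ and $c_{i} + \delta$ and whose affine segments all have slope $\pm(M+1)$; such a sawtooth has period $4\delta/(M+1)$, so it fits in $I_{i}$ however short $I_{i}$ is, and it extends across all of $I_{i}$---still with amplitude $\delta$---even when $I_{i}$ is unbounded. On the finitely many degenerate cells I set $h$ equal to the corresponding value of $g$. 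The resulting $h\colon\R\to\R$ is bounded by $\max_{i}|c_{i}| + \delta$, is Borel (continuous except at the finitely many cell endpoints), and has as its only points of non-differentiability the countably many corners of the sawtooths together with those endpoints; off this countable---hence Lebesgue-null---set, $h$ is differentiable with $|h'| = M+1 > M$. Thus $h$ is $M$-sensitive, and since $|h - g| \leq \delta$ everywhere while $\P(\R) = 1$, we get $\lVert h - g\rVert_{L^{p}(\P)} \leq \delta < \eps$ for every $p \in [1,+\infty)$; the triangle inequality then closes the argument.

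The hard part---or rather, the only point with any real content---is arranging an $L^{p}(\P)$-small perturbation whose slopes are nonetheless larger than an arbitrary prescribed $M$; this is exactly the amplitude-versus-frequency trade-off, and it works because demanding slope $M+1$ only forces a period of order $\delta/(M+1)$, which is free to be as small as we like. Everything else reduces to checking that the glued function genuinely meets the definition of $M$-sensitiveness: global boundedness, global Borel measurability, and---the requirement most easily missed---differentiability off a \emph{Lebesgue}-null (not merely $\P$-null) set, which is automatic here because the bad set is countable. In particular, the jumps that $h$ typically has at the gluing points do no harm, since $M$-sensitiveness asks for no continuity at all, only Lebesgue-a.e.\ differentiability together with a lower bound on the derivative there.
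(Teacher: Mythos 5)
Your proposal is correct and follows essentially the same route as the paper: reduce via outer regularity to Borel functions that are constant on finitely many intervals, then superimpose a low-amplitude, high-frequency piecewise-affine sawtooth whose slopes exceed $M$, exploiting exactly the amplitude-versus-frequency trade-off you describe. The only (immaterial) difference is that the paper adds one global sawtooth $2^{-1}\eps\varphi_{1}$ to the step function (so the steep derivative is inherited everywhere at once), whereas you replace each constant piece by its own local sawtooth centred at that constant.
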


\begin{proof}

For simplicity, we abbreviate $L^{p}(\R, \B_{\R}, \P)$ as $L^{p}(\P)$
in our proof. 

From the definition of a sensitive random variable, the class of all sensitive random variables is included in $L^{p}(\P)$. 

We claim that random variables of the form $\sum_{j=1}^{n}x_{j}\I_{V_{j}}$,
where $n \in \N$ with $x_{1}, \dots, x_{n} \in \R$ and $V_{1}, \dots, V_{n} \subset \R$
being disjoint open intervals, are $L^{p}$-dense in $L^{p}(\P)$.
Indeed, since the simple Borel random variables are $L^{p}$-dense
in $L^{p}(\P)$, by Minkowski inequality it suffices to prove that
every simple Borel random variable lies in the $L^{p}$-closure of the random variables of the desired form. In turn, it suffices to show that for
every $B \in \B_{\R}$ and every $\eps > 0$ there are some disjoint
open intervals $V_{1}, \dots, V_{N} \subset \R$ such that $|\I_{\cup_{j=1}^{N}V_{j}} - \I_{B}|_{L^{p}} < \eps$;
here $| \cdot |_{L^{p}}$ denotes the in-context $L^{p}$-norm. Every
Borel probability measure over a metric space is outer  regular (e.g. Theorem
1.1, Billingsley \cite{b}); so, given any $\eps > 0$ and any $B \in \B_{\R}$, there
is some open (with respect to the usual topology of $\R$, certainly)
subset $G$ of $\R$ such that $G \supset B$ and $\P(G \setminus B) < (\eps/2)^{p}$.
Since $G$ is also a countable union of disjoint open intervals $V_{1}, V_{2}, \dots$
of $\R$, and since $\P$ is a finite measure, there is some $N \in \N$
such that $\P(\cup_{j \geq N+1}V_{j}) < (\eps/2)^{p}$. But then Minkowski
inequality and the disjointness of these $V_{j}$ together imply 
\[
| \I_{\cup_{j=1}^{N}V_{j}} - \I_{B} |_{L^{p}} 
&= | \I_{\cup_{j}V_{j}} - \I_{\cup_{j \geq N+1}V_{j}} - \I_{B}|_{L^{p}}\\
&\leq | \I_{\cup_{j}V_{j} \setminus B}|_{L^{p}} + |\I_{\cup_{j \geq N+1}V_{j}}|_{L^{p}}\\
&< \eps,
\]
and the claim follows.

Let $X \in L^{p}(\P)$. If $\eps > 0$, choose some random variable
$\varphi_{0}$ that is a linear combination of finitely many indicators
of disjoint open intervals of $\R$ such that $|\varphi_{0} - X|_{L^{p}} < \eps/2$.
Then i) the random variable $\varphi_{0}$ is Borel and bounded, and ii) by construction, the set of all the points at which the random variable $\varphi_{0}$ is not differentiable is finite; in particular, the random variable $\varphi_{0}$ is differentiable almost everywhere modulo Lebesgue measure with $D\varphi_{0} = 0$ at every point of differentiability. 

Given any $M \in \R_{+}$,  let $b \ceq \lceil 2\eps^{-1}(M+1) \rceil,$
i.e. let $b$ be the least integer no less than the real number $2\eps^{-1}(M+1)$.
Define a function $\varphi_{1}: \R \to \R$ by assigning $0$ to each
$j/b$ with even $j \in \Z$, assigning $1$ to each $j/b$ with odd
$j \in \Z$, and taking the continuous linear interpolations between
all the points $j/b$ so that $|D\varphi_{1}| = b$ on $\R$ at every
point of differentiability and $\varphi_{1}$ is continuous. Then
$\varphi_{1}$ is Borel and bounded. We would like to remark
also that the set of all the points at which $\varphi_{1}$ is not
differentiable is countable, and each element of the set is isolated
(with respect to the standard topology of $\R$). 

If $Y \ceq \varphi_{0} + 2^{-1}\eps \varphi_{1}$, then $Y$ is Borel and bounded, and
\[
|Y - X|_{L^{p}} 
&\leq
|\varphi_{0} - X|_{L^{p}} + 2^{-1}\eps|\varphi_{1}|_{L^{p}}\\
&< \eps.
\]Further, the set of all the points at which $Y$ is not differentiable
is by construction countable with each element being isolated. In
particular, the random variable $Y$ is Lebesgue-almost everywhere
differentiable, and we have \[
|DY| 
&= 
|D\varphi_{0} + 2^{-1}\eps D\varphi_{1}|\\
&=
2^{-1}\eps|D\varphi_{1}|\\
&=
2^{-1}\eps b\\ 
&\geq 
M+1\\
&> M
\]at every point where $Y$ is differentiable. Since $Y$ is then $M$-sensitive,
the proof is complete. \end{proof}

We draw some posterior remarks herewith:

\begin{rem}

{ \ }

\begin{itemize}[leftmargin=*]

\item Our construction would be ``amicable” in the sense that
it does not depend on particularly deep results in analysis; and it
is inspired by the general proof idea of the marvelous result that
somewhere differentiable functions are meager in any given classical
Wiener space (e.g. Theorem 12.10, Krantz \cite{k}). 

\item In our setting, replacing the $L^{p}$-metric with the uniform
metric is not possible as the involved functions are not
necessarily even bounded.

\item Our proof applies to every Borel finite measure over $\R$;
the proof of Theorem 1.1. in Billingsley \cite{b} is applicable also
to Borel finite measures, and the value of corresponding $L^{p}$-norm
of $\varphi_{1}$ is immaterial.

\item Figuratively, the graph of a typical example of $Y$ constructed
in our proof would be a ``polygon” with finitely many ``missing parts”.\qed

\end{itemize}

\end{rem}

\end{document}